\pgfplotsset{
    legend cell align = {left},
    compat=1.9,
    tick pos = left,
    grid = both,
    grid style = {densely dotted, gray}
}
\newcommand{\bbR}{\mathbb{R}}
\newcommand{\Pone}{
    \ensuremath{ {\mathbb{P}_1} }
}
\newcommand{\phiPone}[1]{
    \ensuremath{ \phi_{#1}^{\mathbb{P}_1} }
}
\newcommand{\phiEps}[1]{
    \ensuremath{ \phi_{#1}^\varepsilon }
}
\newcommand{\VK}[1]{
    \ensuremath{ \chi_K^{\varepsilon,#1} }
}
\newcommand{\dps}{\displaystyle}
\newcommand{\dive}{\operatorname{div}}
\newcommand{\eps}{\varepsilon}
\newtheorem{theorem}{Theorem}
\newtheorem{lemma}[theorem]{Lemma}
\newtheorem{remark}[theorem]{Remark}
\begin{document}

\title{Non-intrusive implementation of Multiscale Finite Element Methods: an illustrative example}

\date{\today}
\author{R. A. Biezemans$^{1,2}$, C. Le Bris$^{1,2}$, F. Legoll$^{1,2}$ and A. Lozinski$^{2,3}$
\\
{\footnotesize $^1$ \'Ecole Nationale des Ponts et Chauss\'ees, 6 et 8 avenue Blaise Pascal,}\\
{\footnotesize 77455 Marne-La-Vall\'ee Cedex 2, France} \\
{\footnotesize $^2$ MATHERIALS project-team, Inria Paris, 2 rue Simone Iff,}\\
{\footnotesize CS 42112, 75589 Paris Cedex 12, France} \\
{\footnotesize $^3$ on partial leave from Laboratoire de Mathématiques de Besançon,}\\
{\footnotesize UMR CNRS 6623, Université de Bourgogne-Franche Comté, France} \\
}





\maketitle

\begin{abstract}
  Multiscale Finite Element Methods (MsFEM) are finite element type approaches dedicated to multiscale problems. They first compute local, oscillatory, problem-dependent basis functions which generate a specific discretization space, and next perform a Galerkin approximation of the problem on that space. We investigate here how these approaches can be implemented in a non-intrusive way, in order to facilitate their dissemination within industrial codes or non academic environments.
\end{abstract}

\section{Introduction}

We consider the highly oscillatory diffusion problem 
\begin{equation}
  -\dive \left( A^\eps \nabla u^\eps \right) = f \ \ \text{in $\Omega$}, \qquad u^\eps = 0 \ \ \text{on $\partial \Omega$},
  \label{eq:diffusion-pde}
\end{equation}
in a bounded domain $\Omega \subset \bbR^d$, where the diffusion coefficient $A^\eps$ is assumed to oscillate on a typical length scale of size $\eps$ much smaller than the diameter of $\Omega$, and to satisfy the usual ellipticity assumptions (see Section~\ref{sec:motivation} below for details).

We seek a numerical approximation of~\eqref{eq:diffusion-pde} by applying a Galerkin approach. It is well-known that standard (say $\Pone$) finite element methods yield an approximation of poor accuracy, as a consequence of the highly oscillatory nature of the problem, unless a prohibitively expensive fine mesh is employed. Dedicated multiscale approaches have thus been introduced, which provide a reasonably accurate approximation of~\eqref{eq:diffusion-pde} for a limited computational cost. Among the many multiscale approaches that have been proposed in the literature, we mention the {\em Heterogeneous Multiscale Method} (henceforth abbreviated as HMM)~\cite{abdulle_heterogeneous_2012}, the {\em Local Orthogonal Decomposition} method (LOD)~\cite{altmann_numerical_2021}, and the {\em Multiscale Finite Element Method} (MsFEM) on which we focus here (see~\cite{hou_multiscale_1997,efendiev_multiscale_2009,clb_fl_jcp} for a comprehensive exposition).

Specifically, the MsFEM method is a finite element type method, which consists of two steps:
\begin{itemize}
\item an ``offline'' phase, where highly oscillatory, problem-dependent (but right-hand side independent) basis functions are numerically computed as solutions to local problems (that mimick the reference problem on a subdomain);
\item an ``online'' phase, where a Galerkin approximation of~\eqref{eq:diffusion-pde}, performed in the finite-dimensional space generated by the basis functions computed in the offline phase, is solved.
\end{itemize}
The MsFEM approach is particularly interesting in multi-query contexts, where~\eqref{eq:diffusion-pde} is to be repeatedly solved for multiple right-hand sides $f$ (think e.g. of optimization problems, or of time-dependent problems where~\eqref{eq:diffusion-pde} would be the typical equation to solve to advance from one time step to the next). In this case, the offline phase is only performed once and a significant computational gain is thus achieved.

\smallskip

Several MsFEM variants exist, depending on the specific definition of the basis functions. Although their implementation is rightfully considered to be relatively easy (see e.g.~\cite{efendiev_multiscale_2009,schillinger_jcompphys}), they are all definitely intrusive. They indeed require to change the finite element basis set and adjust it to the problem at hand. Our aim in this article is to investigate how these approaches can be adapted (possibly at the price of a marginal loss in their efficiency) so that they become as little intrusive as possible, thereby allowing to use only a {\em legacy}, single-scale software (based on standard finite elements) to recover an accurate approximation of~\eqref{eq:diffusion-pde}. We believe that such an endeavor will eventually facilitate the dissemination of MsFEM methods within industrial, non academic codes.

\smallskip

The article is organized as follows. In Section~\ref{sec:motivation}, on the example of the diffusion problem~\eqref{eq:diffusion-pde}, we introduce the simplest MsFEM approach (namely the so-called linear MsFEM) in a Galerkin setting (see Algorithm~\ref{alg:msfem-diff}), and present its non-intrusive variant, namely Algorithm~\ref{alg:nonin-msfem-diff}. We also outline the relation between this variant and a Petrov-Galerkin variant of the MsFEM. Some elements of numerical analysis (along with an illustrative numerical result) are provided in Section~\ref{sec:analysis}, to estimate the additional error introduced by the non-intrusive implementation of the method. It is well-known that the linear MsFEM variant considered in Sections~\ref{sec:motivation} and~\ref{sec:analysis} is outperformed by several MsFEM variants. For pedagogical purposes, we have deliberately chosen to present our ideas on this simple variant and to collect in Section~\ref{sec:conc} some concluding remarks on the many possible extensions of our methodology to design non-intrusive implementations of existing approaches.

\smallskip

We postpone a comprehensive presentation of our procedure, with applications to several MsFEM variants, several equations and various boundary conditions, to an upcoming article~\cite{non_intrusif_long} (see also~\cite{biezemans_difficult_nodate}).

\section{Non-intrusive implementation of MsFEM: a simple case} \label{sec:motivation}

We provide problem~\eqref{eq:diffusion-pde} with all the usual assumptions that make it well-posed at the continuous level and amenable to a classical Galerkin approximation. In particular, we assume that $A^\eps \in L^\infty(\Omega,\bbR^{d \times d})$ satisfies the bounds
\begin{equation} \label{ass:elliptic}
  \begin{array}{c}
    \dps \forall \xi \in \bbR^d, \quad m \, |\xi|^2 \leq \xi \cdot A^\eps(x) \xi \quad \text{a.e. in $\Omega$}
    \\ \noalign{\vskip 1pt}
    \dps \text{and} \qquad \forall \xi,\eta \in \bbR^d, \quad \left| \eta \cdot A^\eps(x) \xi \right| \leq M \, |\xi| \, |\eta| \quad \text{a.e. in $\Omega$},
  \end{array}
\end{equation}
for some $M \geq m > 0$ independent of $\eps$, and that $f$ \emph{does not oscillate} on the microscopic scale $\eps$. Note that no further structural assumptions on $A^\eps$ are made (in particular, $A^\eps$ need not be of the form $A(\cdot/\eps)$ for a fixed rescaled function $A$). We respectively denote by
\begin{equation} \label{eq:def_a_F}
  a^\eps(u,v) = \int_\Omega \nabla v \cdot A^\eps \nabla u
  \qquad \text{and} \qquad
  F(v) = \int_\Omega f v
\end{equation}
the bilinear and linear forms associated to the variational formulation of~\eqref{eq:diffusion-pde}. We seek a numerical approximation of~\eqref{eq:diffusion-pde} by applying an MsFEM type Galerkin approach. To this end, we introduce a conformal simplicial mesh $\mathcal{T}_H$ of $\Omega$ (i.e., made of triangles if $d=2$, tetrahedra if $d=3$) and denote by $V_H$ the usual conformal $\Pone$ approximation space on $\mathcal{T}_H$. For any $u, v \in H^1(K)$, we also define $\dps a^\eps_K(u,v) = \int_K \nabla v \cdot A^\eps \nabla u$ and $\dps F_K(v) = \int_K f v$.

\subsection{The Multiscale Finite Element Method}

The MsFEM is a Galerkin approximation that adapts the finite-dimensional approximation space in order to obtain a satisfactory accuracy even on a coarse mesh. Let $x_1,\dots,x_{N_v}$ be the interior vertices (i.e., the $N_v$ vertices that do not lie on $\partial \Omega$) of $\mathcal{T}_H$, and let $\phiPone{i}$ be the unique element of $V_H$ such that $\phiPone{i}(x_j) = \delta_{i,j}$ for all $1 \leq j \leq N_v$. For any $1 \leq i \leq N_v$, we define the \emph{multiscale basis function} $\phiEps{i} \in H^1_0(\Omega)$, which is supported by the exact same mesh elements as $\phiPone{i}$, by
\begin{equation}
  \forall K \in \mathcal{T}_H, 
  \qquad 
  -\dive \left( A^\eps \nabla \phiEps{i} \right) = 0 \ \ \text{in $K$}
  \quad \text{and} \quad 
  \phiEps{i} = \phiPone{i} \ \ \text{on $\partial K$}.
  \label{eq:MsFEM-basis}
\end{equation}
The multiscale approximation space is defined as
$$
V_H^\eps = \operatorname{span} \left\{ \phiEps{i}, \ 1 \leq i \leq N_v \right\},
$$
and it has the same dimension as $V_H$. The MsFEM approach then consists in computing the approximation $u^\eps_H \in V_H^\eps$ defined by the problem
\begin{equation}
  \forall v^\eps_H \in V_H^\eps, \qquad a^\eps(u^\eps_H,v^\eps_H) = F(v^\eps_H).
  \label{eq:diffusion-MsFEM}
\end{equation}
Since the space $V_H^\eps$ is problem-dependent, we can hope (and this is indeed the case) the approximation~\eqref{eq:diffusion-MsFEM} to capture the exact solution much better than a $\Pone$ approximation on the same mesh (even on a mesh of size $H$ of the order of $\eps$).

\begin{remark}[Offline vs. online phase]
  In practice, the local problems~\eqref{eq:MsFEM-basis} need to be approximated, for instance using a standard $\Pone$ approximation on a fine mesh (of $K \in \mathcal{T}_H$) of mesh size $h \leq \eps$ which resolves the oscillations of $A^\eps$. We omit here this additional discretization. All what follows can readily be extended to the case when only some numerical approximation of $\phiEps{i}$ is available.

The computation of $\dps \{ \phiEps{i} \}_{1 \leq i \leq N_v}$ is called the \emph{offline phase} of the MsFEM. All the problems~\eqref{eq:MsFEM-basis} on different mesh elements are independent of each other, and can thus be solved in parallel. On the other hand, the term \emph{online phase} is used for the resolution of the global problem~\eqref{eq:diffusion-MsFEM}, where the number of degrees of freedom is the same as in a standard $\Pone$ approximation on $V_H$. \qed
\end{remark}

The MsFEM approach can schematically be presented as Algorithm~\ref{alg:msfem-diff}. Lines~\ref{alg:msfem-diff-offl1}--\ref{alg:msfem-diff-offl2} (resp.~\ref{alg:msfem-diff-onl1}--\ref{alg:msfem-diff-onl2}) constitute the offline (resp. online) phase.

\begin{algorithm}[ht] 
\caption{MsFEM approach for Problem~\eqref{eq:diffusion-pde} (see comments in the text)}
\label{alg:msfem-diff}
\begin{algorithmic}[1]
  \State Construct a mesh $\mathcal{T}_H$ of $\Omega$, denote $N_v$ the number of internal vertices and $\mathcal{N}(n,K)$ the global index of the vertex of $K \in \mathcal{T}_H$ that has local index $1 \leq n \leq d+1$ in $K$
  \label{alg:msfem-diff-offl1}
  \medskip
  \For{$1 \leq i \leq N_v$}
  \State Solve for $\phiEps{i}$ in~\eqref{eq:MsFEM-basis} 
  \EndFor
  \medskip
  \State Set $\mathds{A}^\eps := 0$ and $\mathds{F}^\eps := 0$
  \ForAll{$K \in \mathcal{T}_H$}
  \For{$1 \leq m \leq d+1$}
  \State Set $j := \mathcal{N}(m,K)$  
  \For{$1 \leq n \leq d+1$}
  \State Set $i := \mathcal{N}(n,K)$ and $\mathds{A}^\eps_{j,i} \ \ \text{+=} \ \ a^\eps_K\left(\phiEps{i},\phiEps{j}\right)$
  \label{alg:msfem-diff-stiffness}
  \EndFor
  \label{alg:msfem-diff-offl2}
  \State Set $\mathds{F}^\eps_j \ \ \text{+=} \ \ F_K\left(\phiEps{j}\right)$
  \label{alg:msfem-diff-onl1}
  \EndFor
  \EndFor
  \medskip
  \State Solve the linear system $\mathds{A}^\eps \, U^\eps = \mathds{F}^\eps$
  \State Obtain the MsFEM approximation $\dps u^\eps_H = \sum_{i=1}^{N_v} U_i^\eps \, \phiEps{i}$
  \label{alg:msfem-diff-onl2}
\end{algorithmic}
\end{algorithm}

\smallskip

Implementing Algorithm~\ref{alg:msfem-diff} in an industrial code is challenging. Indeed, the practical implementation of any finite element method relies on (i) the construction of a mesh, (ii) the construction of the linear system associated to the discrete variational formulation and (iii) the resolution of the linear system. An efficient implementation of the second step heavily relies on the choice of the discretization space. Regarding the construction of the linear system (performed in line~\ref{alg:msfem-diff-stiffness} of Algorithm~\ref{alg:msfem-diff}), it is by no means obvious to adapt existing finite element codes based on generic spaces like $V_H$ to a different, problem-dependent choice of space such as $V_H^\eps$. No analytic expressions for the basis functions $\phiEps{i}$ are available (and thus a fine mesh should be used to approximate them), the computation of $\dps a^\eps_K\left(\phiEps{i},\phiEps{j}\right)$ and $\dps F_K\left(\phiEps{j}\right)$ should be performed by quadrature rules on the fine mesh because the integrands are highly oscillatory, one should have at hand the correspondence between element and vertex indices in the coarse mesh, the assembly of the global stiffness matrix $\dps \left\{ \mathds{A}^\eps_{j,i} \right\}_{1 \leq i,j \leq N_v}$ should be manually performed, \dots To alleviate these obstacles, we shall next introduce a way of implementing MsFEM that capitalizes on an existing code for solving~\eqref{eq:diffusion-pde} by a $\Pone$ approximation on $\mathcal{T}_H$ in the case of slowly-varying diffusion coefficients.

To the best of our knowledge, the question of how to make MsFEM approaches less intrusive has not been studied in the literature, and this work (that will be complemented in the future works~\cite{non_intrusif_long,biezemans_difficult_nodate}) is a first step in that direction. On the other hand, for some other multiscale approaches (including HMM and LOD), this question has been (at least partially) addressed. By construction, HMM methods are less invasive, since they primarily aim at approximating $u^\eps$ on the {\em coarse scales}. The first step of these methods somewhat consists in building an effective, slowly-varying diffusion coefficient (which plays the role of the matrix $\overline{A}$ introduced in~\eqref{eq:linear-system-effective} below), which can next be used in any single-scale solver. The LOD approach, which, similarly to MsFEM, aims at approximating $u^\eps$ on {\em the coarse and the fine scales}, is also invasive in general, since it also introduces adapted basis functions. As shown in~\cite{gallistl_computation_2017}, the LOD can be recast as a $\Pone$ discretization of an appropriate single-scale problem, an observation which opens the way to non-intrusive implementations. Note however that, despite its current intrusiveness, MsFEM has its own advantages over other multiscale approaches: it directly aims at approximating the oscillatory solution $u^\eps$ ({\em including} its fine scale details), and makes use of {\em fully localized} basis functions to do so.

\subsection{Equivalent problem on the macroscopic scale} \label{sec:equivalent}

Our starting point for reducing intrusiveness in the above MsFEM implementation is the following key observation. On any $K \in \mathcal{T}_H$, by linearity of the definition~\eqref{eq:MsFEM-basis} of $\phiEps{i}$ in terms of $\phiPone{i}$, and because the finite element space $V_H$ consists of functions that are piecewise affine (that is to say, $\nabla \phiPone{i}$ is constant in each $K$), we have the expansion
\begin{equation} \label{eq:fondamental}
  \left. \phiEps{i}(x) \right\vert_K = \phiPone{i}(x) + \sum_{\alpha=1}^d (\partial_\alpha \phiPone{i})\vert_K \ \VK{\alpha}(x),
\end{equation}
for any basis function $\phiEps{i}$ of $V^\eps_H$. Here, $\VK{\alpha} \in H^1_0(K)$ is defined as the solution to the local problem
\begin{equation}
  -\dive \left(A^\eps \nabla \VK{\alpha}\right) = \dive \left(A^\eps e_\alpha\right) \ \ \text{in $K$},
  \qquad
  \VK{\alpha} = 0 \ \ \text{on $\partial K$},
  \label{eq:MsFEM-Vxy}
\end{equation}
where $e_\alpha$ denotes the $\alpha$-th canonical unit vector of $\bbR^d$. Considering indeed the right-hand side of~\eqref{eq:fondamental}, we compute that its gradient in $K$ is $\dps \sum_{\alpha=1}^d (\partial_\alpha \phiPone{i})\vert_K \ \left(e_\alpha + \nabla \VK{\alpha}(x)\right)$. In view of~\eqref{eq:MsFEM-Vxy}, we obtain that $\dps -\dive A^\eps \left[ \sum_{\alpha=1}^d (\partial_\alpha \phiPone{i})\vert_K \ \left(e_\alpha + \nabla \VK{\alpha}\right) \right] = 0$ in $K$. The right-hand side of~\eqref{eq:fondamental} therefore satisfies the PDE in~\eqref{eq:MsFEM-basis}. It also satisfies the boundary conditions in~\eqref{eq:MsFEM-basis}, in view of the boundary conditions in~\eqref{eq:MsFEM-Vxy}. Since the solution to~\eqref{eq:MsFEM-basis} is unique, we deduce the identity~\eqref{eq:fondamental}.

\smallskip

For each $K \in \mathcal{T}_H$ and each $1 \leq \alpha \leq d$, we extend $\VK{\alpha}$ by $0$ outside $K$, thereby obtaining a function $\VK{\alpha} \in H^1_0(\Omega)$. We then deduce from~\eqref{eq:fondamental} that there is a one-to-one correspondence between functions in $V_H^\eps$ and functions in $V_H$. More precisely, for any $v^\eps_H \in V_H^\eps$, there exists a unique $v_H \in V_H$ such that
\begin{equation}
  v_H^\eps = v_H + \sum_{K \in \mathcal{T}_H} \sum_{\alpha=1}^d (\partial_\alpha v_H) \vert_K \ \VK{\alpha},
  \label{eq:expansion-MS-P1}
\end{equation}
and conversely, for any $v_H \in V_H$, we have $\dps v_H + \sum_{K \in \mathcal{T}_H} \sum_{\alpha=1}^d (\partial_\alpha v_H) \vert_K \ \VK{\alpha} \in V_H^\eps$. In particular, the function associated to $\phiEps{i} \in V_H^\eps$ is $\phiPone{i} \in V_H$.

\smallskip

We now consider the linear system associated to~\eqref{eq:diffusion-MsFEM} and insert therein the expansion~\eqref{eq:expansion-MS-P1} for the multiscale basis functions. The solution to~\eqref{eq:diffusion-MsFEM} reads $\dps u^\eps_H = \sum_{i=1}^{N_v} U^\eps_i \, \phiEps{i}$, where $\dps U^\eps = \left(U^\eps_1, \dots, U^\eps_{N_v}\right)^T$ is the solution to
\begin{equation}
  \mathds{A}^\eps \, U^\eps = \mathds{F}^\eps,
  \label{eq:discrete-ms}
\end{equation}
with
\begin{equation*}
  \forall 1 \leq i, j \leq N_v,
  \qquad
  \mathds{A}^\eps_{j,i} = a^\eps\left(\phiEps{i},\phiEps{j}\right),
  \qquad 
  \mathds{F}^\eps_j = F\left(\phiEps{j}\right).  
\end{equation*}
Taking $v_H^\eps = \phiEps{i}$ (and thus $v_H = \phiPone{i}$) in~\eqref{eq:expansion-MS-P1} and using that $\nabla \phiPone{i}$ is piecewise constant, we write
$$
\nabla \phiEps{i} = \sum_{K \in \mathcal{T}_H} \sum_{\alpha=1}^d (\partial_\alpha \phiPone{i})\vert_K \ \left(e_\alpha + \nabla \VK{\alpha}\right).
$$
Inserting this relation in the definition of the matrix elements $\mathds{A}^\eps_{j,i}$, we obtain
\begin{align}
  \mathds{A}^\eps_{j,i} 
  &=
  \int_\Omega \nabla \phiEps{j} \cdot A^\eps \nabla \phiEps{i}
  \nonumber
  \\
  &=
  \sum_{K \in \mathcal{T}_H} \sum_{\alpha,\beta=1}^d (\partial_\beta \phiPone{j})\vert_K \ \left( \int_K \left(e_\beta + \nabla \VK{\beta}\right) \cdot A^\eps \left(e_\alpha + \nabla \VK{\alpha}\right) \right) \ (\partial_\alpha \phiPone{i})\vert_K
  \nonumber
  \\
  &=
  \sum_{K \in \mathcal{T}_H} \sum_{\alpha,\beta=1}^d (\partial_\beta \phiPone{j})\vert_K \ a^\eps_K\left(x^\alpha + \VK{\alpha},x^\beta + \VK{\beta}\right) \ (\partial_\alpha \phiPone{i})\vert_K,
  \label{eq:stiffness-diffusion-ms}
\end{align}
where $x^\alpha = x \cdot e_\alpha$ is the $\alpha$-th coordinate. We now define the piecewise constant matrix-valued field $\overline{A} \in \mathbb{P}_0(\mathcal{T}_H,\bbR^{d\times d})$ by
\begin{equation}
  \left. \overline{A}_{\beta,\alpha} \right\vert_K 
  =
  \frac{1}{|K|} \, a^\eps_K\left(x^\alpha + \VK{\alpha},x^\beta + \VK{\beta}\right)
  \quad \text{for each $K \in \mathcal{T}_H$ and $1 \leq \alpha,\beta \leq d$},
  \label{eq:linear-system-effective}
\end{equation}
where $|K|$ denotes the area or volume of the mesh element $K$. Using~\eqref{eq:MsFEM-Vxy} and the bounds~\eqref{ass:elliptic} satisfied by $A^\eps$, it is easy to show (see~\cite{non_intrusif_long}) that $\overline{A}$ satisfies the following uniform lower and upper bounds: for any $\xi$ and $\eta$ in $\bbR^d$,
\begin{equation}
  m \, |\xi|^2 \leq \xi \cdot \overline{A}(x) \xi \ \ \text{and} \ \ \left| \eta \cdot \overline{A}(x) \xi \right| \leq M \left( 1 + \frac{M}{m} \right) |\xi| \, |\eta| \ \ \text{a.e. in $\Omega$}.
  \label{ass:elliptic_Abar}
\end{equation}
Motivated by~\eqref{eq:stiffness-diffusion-ms} and~\eqref{eq:linear-system-effective}, we introduce the coarse-scale problem
\begin{equation}
  -\dive \left(\overline{A} \, \nabla u \right) = f \ \ \text{in $\Omega$}, \qquad u = 0 \ \ \text{on $\partial \Omega$},
  \label{eq:diffusion-pde-macro}
\end{equation}
and its $\Pone$ Galerkin discretization: find $u_H \in V_H$ such that
\begin{equation}
  \forall v_H \in V_H, \qquad a^{\overline{A}}(u_H,v_H) = F(v_H),
  \label{eq:diffusion-vf-macro}
\end{equation}
where the linear form $F$ is defined by~\eqref{eq:def_a_F} and the bilinear form $a^{\overline{A}}$ is defined by
\begin{equation*}
  \forall u, v \in H^1_0(\Omega), \qquad a^{\overline{A}}(u,v) = \sum_{K \in \mathcal{T}_H} a^{\overline{A}}_K(u,v) \quad \text{with} \quad a^{\overline{A}}_K(u,v) = \int_K \nabla v \cdot \overline{A} \nabla u. 
\end{equation*}
Problem~\eqref{eq:diffusion-vf-macro} equivalently writes
\begin{equation}
  \mathds{A}^{\Pone} \, U^{\Pone} = \mathds{F}^{\Pone},
  \label{eq:discrete-macro}
\end{equation}
with
\begin{equation}
  \forall 1 \leq i, j \leq N_v,
  \qquad
  \mathds{A}^{\Pone}_{j,i} = a^{\overline{A}}\left(\phiPone{i},\phiPone{j}\right),
  \qquad 
  \mathds{F}^{\Pone}_j = F\left(\phiPone{j}\right).  
  \label{eq:linear-system-macro}
\end{equation}
We then deduce from~\eqref{eq:stiffness-diffusion-ms} that
\begin{equation}
  \mathds{A}^\eps_{j,i} = \int_\Omega \nabla \phiPone{j} \cdot \overline{A} \nabla \phiPone{i} = \mathds{A}^{\Pone}_{j,i},
  \label{eq:linear-system-P1}
\end{equation}
where we recall that the piecewise constant matrix $\overline{A}$ given by~\eqref{eq:linear-system-effective}, and therefore the stiffness matrix $\mathds{A}^{\Pone}$, depends on the fine scale oscillations of $A^\eps$. The above calculations yield the following result.

\begin{lemma} \label{lem:fondamental}
The stiffness matrix $\mathds{A}^\eps$ in~\eqref{eq:discrete-ms} of the MsFEM problem~\eqref{eq:diffusion-MsFEM} is identical to the stiffness matrix $\mathds{A}^{\Pone}$ in~\eqref{eq:discrete-macro} of the $\Pone$ problem~\eqref{eq:diffusion-vf-macro}. 
\end{lemma}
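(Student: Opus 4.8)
The plan is to establish the matrix identity entrywise, by directly computing $\mathds{A}^\eps_{j,i} = a^\eps(\phiEps{i},\phiEps{j})$ and recognizing the result as $\mathds{A}^{\Pone}_{j,i}$. The whole argument rests on the fundamental expansion~\eqref{eq:fondamental}, which I would take as the starting point: on each element $K$ it writes $\phiEps{i}$ as the affine function $\phiPone{i}$ corrected by a linear combination of the element correctors $\VK{\alpha}$, with the crucial feature that the coefficients $(\partial_\alpha \phiPone{i})\vert_K$ are \emph{constant} on $K$ (because $V_H$ consists of piecewise affine functions). Differentiating, the gradient of $\phiEps{i}$ on $K$ is the piecewise constant field $\sum_{\alpha=1}^d (\partial_\alpha \phiPone{i})\vert_K \, (e_\alpha + \nabla \VK{\alpha})$.

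First I would substitute these expressions for $\nabla \phiEps{i}$ and $\nabla \phiEps{j}$ into $\int_\Omega \nabla \phiEps{j} \cdot A^\eps \nabla \phiEps{i}$, split the integral over the elements $K \in \mathcal{T}_H$ (legitimate since each corrector is supported in a single element), and pull the constant factors $(\partial_\alpha \phiPone{i})\vert_K$ and $(\partial_\beta \phiPone{j})\vert_K$ out of the integral. This yields precisely the double sum~\eqref{eq:stiffness-diffusion-ms}, in which the inner integrals are the local bilinear forms $a^\eps_K(x^\alpha + \VK{\alpha}, x^\beta + \VK{\beta})$; here one uses $\nabla x^\alpha = e_\alpha$ to identify $e_\alpha + \nabla \VK{\alpha}$ with $\nabla(x^\alpha + \VK{\alpha})$.

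Next I would invoke the definition~\eqref{eq:linear-system-effective} of the piecewise constant matrix $\overline{A}$ to replace each $a^\eps_K(x^\alpha + \VK{\alpha}, x^\beta + \VK{\beta})$ by $|K|\,\overline{A}_{\beta,\alpha}\vert_K$. Because $\overline{A}$, $\nabla \phiPone{i}$ and $\nabla \phiPone{j}$ are all constant on $K$, the factor $|K|$ lets me rewrite the contribution of $K$ as $\int_K \nabla \phiPone{j} \cdot \overline{A}\,\nabla \phiPone{i}$; summing over $K$ reassembles the global integral $\int_\Omega \nabla \phiPone{j} \cdot \overline{A}\,\nabla \phiPone{i}$, which is exactly $a^{\overline{A}}(\phiPone{i},\phiPone{j}) = \mathds{A}^{\Pone}_{j,i}$ by~\eqref{eq:linear-system-macro}. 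This is the chain~\eqref{eq:stiffness-diffusion-ms}--\eqref{eq:linear-system-P1}, and it proves the lemma.

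There is no genuine analytic obstacle here: the statement is an algebraic consequence of the linearity of~\eqref{eq:MsFEM-basis} and of the piecewise affine structure of $V_H$. The one point that really carries the argument, and which I would be careful to state explicitly, is that $\nabla \phiPone{i}$ is element-wise constant. This is what allows a single multiscale basis function $\phiEps{i}$ to be decomposed over a \emph{fixed}, right-hand-side-independent family of $d$ correctors per element, and hence what collapses the oscillatory integral into the constant tensor $\overline{A}$. Everything else is bookkeeping of the indices $\alpha,\beta$ and the matching of the two definitions~\eqref{eq:discrete-ms} and~\eqref{eq:linear-system-macro} of the stiffness entries.
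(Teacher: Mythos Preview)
Your proposal is correct and follows essentially the same approach as the paper: the lemma is stated as a direct consequence of the computations~\eqref{eq:stiffness-diffusion-ms}--\eqref{eq:linear-system-P1}, which you have reproduced faithfully, including the key observation that $\nabla \phiPone{i}$ is piecewise constant.
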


We note that the right-hand vector $\mathds{F}^\eps$ in~\eqref{eq:discrete-ms} is in general different from the right-hand side vector $\mathds{F}^{\Pone}$ in~\eqref{eq:discrete-macro}, since we integrate $f$ against highly oscillatory basis functions in the former problem and against $\Pone$ basis functions in the latter. The solutions $U^\eps$ and $U^{\Pone}$ to~\eqref{eq:discrete-ms} and~\eqref{eq:discrete-macro}, respectively, are thus a priori different.

\smallskip

The above observations suggest to use the identity~\eqref{eq:linear-system-P1} of the stiffness matrices to replace the MsFEM discrete problem~\eqref{eq:discrete-ms} by the discrete problem~\eqref{eq:discrete-macro} stemming from the $\Pone$ Galerkin approximation of the {\em single scale} problem~\eqref{eq:diffusion-pde-macro}, which itself may be easily implemented in legacy codes. This results in our non-intrusive MsFEM strategy presented in Algorithm~\ref{alg:nonin-msfem-diff}. We can distinguish there the \emph{offline phase} (in lines~\ref{alg:nonin-msfem-diff-offl1}--\ref{alg:nonin-msfem-diff-offl2}) and the \emph{online phase} (in lines~\ref{alg:nonin-msfem-diff-onl1}--\ref{alg:nonin-msfem-diff-onl2}). The few lines that differ between Algorithm~\ref{alg:nonin-msfem-diff} and a standard $\Pone$ algorithm are highlighted in blue. The lines in black are already present in standard codes and are written in Algorithm~\ref{alg:nonin-msfem-diff} for the sake of completeness.

\begin{algorithm}[ht] 
  \caption{Non-intrusive MsFEM approach for Problem~\eqref{eq:diffusion-pde} (see comments in the text)}
  \label{alg:nonin-msfem-diff}
  \begin{algorithmic}[1]
    \State Construct a mesh $\mathcal{T}_H$ of $\Omega$, denote $N_v$ the number of internal vertices and $\mathcal{N}(n,K)$ the global index of the vertex of $K \in \mathcal{T}_H$ that has local index $1 \leq n \leq d+1$ in $K$
    \label{alg:nonin-msfem-diff-offl1}
    \medskip
    \color{blue}
    \ForAll{$K \in \mathcal{T}_H$}
    \For{$1 \leq \alpha \leq d$}
    \label{debut_VK}
    \State Solve for $\VK{\alpha}$ defined by~\eqref{eq:MsFEM-Vxy}
    \EndFor
    \State Compute $\overline{A} \vert_K$ defined by~\eqref{eq:linear-system-effective}
    \label{fin_VK}    
    \EndFor
    \color{black}
    \medskip
    \State Set $\mathds{A}^{\Pone} := 0$ and $\mathds{F}^{\Pone} := 0$
    \ForAll{$K \in \mathcal{T}_H$}
    \For{$1 \leq m \leq d+1$}
    \State Set $j := \mathcal{N}(m,K)$  
    \For{$1 \leq n \leq d+1$}
    \State Set $i := \mathcal{N}(n,K)$ and $\mathds{A}^{\Pone}_{j,i} \ \ \text{+=} \ \ a^{\overline{A}}_K\left(\phiPone{i},\phiPone{j}\right)$ 
    \EndFor
    \label{alg:nonin-msfem-diff-offl2}
    \State Compute $\mathds{F}^{\Pone}_j \ \ \text{+=} \ \ F_K\left(\phiPone{j}\right)$
    \label{alg:nonin-msfem-diff-onl1}
    \EndFor
    \EndFor
    \medskip
    \State Solve the linear system $\mathds{A}^{\Pone} U^{\Pone} = \mathds{F}^{\Pone}$
    \State Obtain the coarse approximation $\dps u_H = \sum_{i=1}^{N_v} U_i^{\Pone} \, \phiPone{i}$
    \color{blue}
    \State Obtain the MsFEM approximation given in $K$ by $\dps \overline{u}^\eps_H = u_H + \sum_{\alpha=1}^d (\partial_\alpha u_H)\vert_K \ \VK{\alpha}$
    \color{black}
    \label{alg:nonin-msfem-diff-onl2}
  \end{algorithmic}
\end{algorithm}

\smallskip

The superiority of Algorithm~\ref{alg:nonin-msfem-diff} over the classical MsFEM Algorithm~\ref{alg:msfem-diff} is that the global problem of the online phase (including its right-hand side) can be completely constructed and solved using a pre-existing $\Pone$ PDE solver. The only requirements in the legacy code are the ability to provide piecewise constant diffusion coefficients to the solver and the existence of a procedure which provides the value of the solution at any point in $\Omega$. The part of the offline phase which manipulates fine meshes (in lines~\ref{debut_VK}-\ref{fin_VK}) and the post-processing phase (in line~\ref{alg:nonin-msfem-diff-onl2}) can, on the other hand, be developed independently. The requirement for these fine-scale solvers is that they have access to the coarse mesh $\mathcal{T}_H$ of the global solver and that they can evaluate $u_H$ anywhere in $\Omega$ (which is useful in line~\ref{alg:nonin-msfem-diff-onl2}). Note also that the fine-scale problem~\eqref{eq:MsFEM-Vxy} is only indexed by the coarse mesh element $K$, in contrast to the fine scale problem~\eqref{eq:MsFEM-basis}, which is indexed both by the coarse mesh element $K$ and the vertex index $i$. In the latter case, one has to know, for each element $K$, the global number of the element vertices, a piece of information which may be difficult to access to in a legacy code. In the former case, this correspondence is not needed to compute $\overline{A}$ and $\overline{u}^\eps_H$, which are entirely defined element-wise.

\begin{remark}[Link with homogenization theory]
  The above non-intrusive implementation involves quantities which are of course reminiscent of standard quantities introduced in homogenization. The problem~\eqref{eq:MsFEM-Vxy}, the effective diffusion matrix~\eqref{eq:linear-system-effective}, the single-scale problem~\eqref{eq:diffusion-pde-macro} and the expansion~\eqref{eq:expansion-MS-P1} ressemble the corrector problem, the homogenized matrix, the homogenized problem and the two-scale expansion of the oscillatory solution, respectively. In the same spirit, in the case when $A^\eps$ is the rescaling of some periodic matrix, it can be shown (see~\cite{non_intrusif_long}) that the effective diffusion matrix $\overline{A}$ converges to the homogenized matrix when $\eps \to 0$. \qed  
\end{remark}  

As shown by~\eqref{eq:linear-system-P1}, the matrix $\mathds{A}^{\Pone}$ in~\eqref{eq:discrete-macro} is identical to the matrix $\mathds{A}^\eps$ in~\eqref{eq:discrete-ms}. However, in general, the right-hand sides $\mathds{F}^\eps$ in~\eqref{eq:discrete-ms} and $\mathds{F}^{\Pone}$ in~\eqref{eq:discrete-macro} are different. This motivates the introduction of the following Petrov-Galerkin variant of the MsFEM: find $u^{\eps,PG}_H \in V^\eps_H$ such that 
\begin{equation}
  \forall v_H \in V_H, \qquad a^\eps\left(u^{\eps,PG}_H,v_H\right) = F(v_H).
    \label{eq:diffusion-MsFEM-testP1}
\end{equation}
Note that, in contrast to~\eqref{eq:diffusion-MsFEM}, we take the test functions in the $\Pone$ space $V_H$ rather than in the multiscale space $V_H^\eps$. We denote $\dps \mathds{A}^{\eps,PG}_{j,i} = a^\eps\left(\phiEps{i},\phiPone{j}\right)$ the stiffness matrix of the resulting linear system. Of course, the right-hand side vector of this linear system is equal to $\mathds{F}^{\Pone}$ defined by~\eqref{eq:linear-system-macro}.

\begin{lemma} \label{lem:MsFEMP1-macro-diff}
  The stiffness matrices $\mathds{A}^\eps$ and $\mathds{A}^{\eps,PG}$ of the problems~\eqref{eq:diffusion-MsFEM} and~\eqref{eq:diffusion-MsFEM-testP1} are identical (and thus identical to $\mathds{A}^{\Pone}$). The right-hand side vector of~\eqref{eq:diffusion-MsFEM-testP1} is identical to the right-hand side vector $\mathds{F}^{\Pone}$ of the problem~\eqref{eq:diffusion-vf-macro}. The solution $u^{\eps,PG}_H$ to the Petrov-Galerkin MsFEM~\eqref{eq:diffusion-MsFEM-testP1} thus coincides with the function $\overline{u}^\eps_H$ computed by Algorithm~\ref{alg:nonin-msfem-diff}.
\end{lemma}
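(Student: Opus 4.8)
The plan is to prove the three assertions in order, devoting the bulk of the effort to the first (equality of the stiffness matrices), after which the remaining two follow almost immediately. Throughout I may freely use the identity \eqref{eq:linear-system-P1}, i.e.\ $\mathds{A}^\eps = \mathds{A}^{\Pone}$, established in Lemma~\ref{lem:fondamental}.

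First I would show that $\mathds{A}^{\eps,PG} = \mathds{A}^\eps$. Starting from $\mathds{A}^{\eps,PG}_{j,i} = a^\eps(\phiEps{i},\phiPone{j}) = \int_\Omega \nabla \phiPone{j} \cdot A^\eps \nabla \phiEps{i}$, I insert the gradient expansion of $\nabla \phiEps{i}$ derived just before Lemma~\ref{lem:fondamental}, and use that $\nabla \phiPone{j}$ is constant on each $K$, equal to $\sum_\beta (\partial_\beta \phiPone{j})\vert_K\, e_\beta$. This gives
$$
\mathds{A}^{\eps,PG}_{j,i} = \sum_{K \in \mathcal{T}_H} \sum_{\alpha,\beta=1}^d (\partial_\beta \phiPone{j})\vert_K \left( \int_K e_\beta \cdot A^\eps \left(e_\alpha + \nabla \VK{\alpha}\right) \right) (\partial_\alpha \phiPone{i})\vert_K .
$$
Comparing with the expression \eqref{eq:stiffness-diffusion-ms} for $\mathds{A}^\eps_{j,i}$, in which the integrand is $\left(e_\beta + \nabla \VK{\beta}\right) \cdot A^\eps \left(e_\alpha + \nabla \VK{\alpha}\right)$, the two coincide provided that, on each $K$ and for all $\alpha,\beta$,
$$
\int_K \nabla \VK{\beta} \cdot A^\eps \left(e_\alpha + \nabla \VK{\alpha}\right) = 0 .
$$

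This orthogonality relation is the one substantive point of the argument, and the step I would flag as the conceptual crux; everything else is bookkeeping. It expresses that the bilinear form does not distinguish between testing against the full multiscale function and against its affine part, and it follows directly from the weak formulation of the corrector problem \eqref{eq:MsFEM-Vxy}: since $-\dive\left(A^\eps\left(e_\alpha + \nabla \VK{\alpha}\right)\right) = 0$ in $K$ with $\VK{\alpha} \in H^1_0(K)$, testing against an arbitrary $w \in H^1_0(K)$ yields $\int_K \nabla w \cdot A^\eps \left(e_\alpha + \nabla \VK{\alpha}\right) = 0$, and the admissible choice $w = \VK{\beta} \in H^1_0(K)$ gives exactly the desired identity. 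Combined with \eqref{eq:linear-system-P1}, this establishes $\mathds{A}^{\eps,PG} = \mathds{A}^\eps = \mathds{A}^{\Pone}$.

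The second assertion is immediate from the definitions: the test space in \eqref{eq:diffusion-MsFEM-testP1} is $V_H$, so the right-hand side has entries $F(\phiPone{j})$, which is precisely $\mathds{F}^{\Pone}_j$ from \eqref{eq:linear-system-macro}. For the third assertion, I would expand $u^{\eps,PG}_H = \sum_i U_i^{PG} \phiEps{i}$ and test \eqref{eq:diffusion-MsFEM-testP1} against $v_H = \phiPone{j}$, which produces the linear system $\mathds{A}^{\eps,PG} U^{PG} = \mathds{F}^{\Pone}$. Since $\mathds{A}^{\eps,PG} = \mathds{A}^{\Pone}$ and $\mathds{A}^{\Pone}$ is invertible (by coercivity following from the lower bound in \eqref{ass:elliptic_Abar}), the two systems share matrix and right-hand side, whence $U^{PG} = U^{\Pone}$. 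Finally, by the one-to-one correspondence \eqref{eq:expansion-MS-P1}, the function $\overline{u}^\eps_H = u_H + \sum_{K} \sum_\alpha (\partial_\alpha u_H)\vert_K \, \VK{\alpha}$ assembled in Algorithm~\ref{alg:nonin-msfem-diff} from $u_H = \sum_i U_i^{\Pone} \phiPone{i}$ is exactly the element $\sum_i U_i^{\Pone} \phiEps{i}$ of $V_H^\eps$; equality of the coefficients $U^{PG} = U^{\Pone}$ then gives $\overline{u}^\eps_H = u^{\eps,PG}_H$, as claimed.
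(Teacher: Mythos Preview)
Your proof is correct and rests on the same orthogonality relation as the paper's, but the route differs slightly. The paper works directly with the basis functions: it computes $a^\eps\!\left(\phiEps{i},\,\phiEps{j}-\phiPone{j}\right)$, integrates by parts on each $K$, and observes that both the boundary term (since $\phiEps{j}=\phiPone{j}$ on $\partial K$) and the bulk term (since $\dive(A^\eps\nabla\phiEps{i})=0$ in $K$) vanish, yielding $\mathds{A}^\eps_{j,i}=\mathds{A}^{\eps,PG}_{j,i}$ in one stroke. You instead expand both matrices through the corrector decomposition~\eqref{eq:fondamental} and reduce the comparison to the identity $\int_K \nabla\VK{\beta}\cdot A^\eps(e_\alpha+\nabla\VK{\alpha})=0$, which you obtain from the weak form of~\eqref{eq:MsFEM-Vxy}. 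The two arguments are equivalent (your identity is precisely the paper's integration by parts written in the corrector variables); the paper's version is a bit more concise because it bypasses the expansion altogether, while yours makes the link to the effective matrix~$\overline{A}$ more transparent. Your treatment of the second and third assertions is more explicit than the paper's (which simply states they follow once the stiffness matrices coincide) and is entirely correct.
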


\begin{proof}
  To prove this lemma, it is enough to show that the stiffness matrices of~\eqref{eq:diffusion-MsFEM} and~\eqref{eq:diffusion-MsFEM-testP1} are equal. Using an integration by parts, we compute
\begin{align}
  \! \! a^\eps\left(\phiEps{i},\phiEps{j} - \phiPone{j}\right)
  &=
  \sum_{K\in\mathcal{T}_H} \int_K \nabla \left( \phiEps{j} - \phiPone{j} \right) \cdot A^\eps \nabla \phiEps{i}
  \nonumber
  \\
  &=
  \sum_{K\in\mathcal{T}_H} \int_{\partial K} \! \! \left( \phiEps{j} - \phiPone{j} \right) n \cdot A^\eps \nabla \phiEps{i} - \int_K \! \! \left( \phiEps{j} - \phiPone{j} \right) \dive \left( A^\eps \nabla \phiEps{i} \right),
  \label{eq:IPP-bubbles}
\end{align}
where $n$ is the unit outward normal vector on $\partial K$. The two terms above vanish in view of~\eqref{eq:MsFEM-basis}. This implies the identity of the stiffness matrices of~\eqref{eq:diffusion-MsFEM} and~\eqref{eq:diffusion-MsFEM-testP1} and the well-posedness of the Petrov-Galerkin approximation~\eqref{eq:diffusion-MsFEM-testP1}.
\end{proof}

To summarize, the above procedure to go from the MsFEM problem~\eqref{eq:diffusion-MsFEM} to its non-intrusive implementation described in Algorithm~\ref{alg:nonin-msfem-diff} is based on the following steps:
\begin{enumerate}
\item we use the linearity of the problem and the fact that gradients of $\Pone$ basis functions are constant in each mesh element to establish the identity~\eqref{eq:fondamental};
\item we can then recast the MsFEM stiffness matrix $\mathds{A}^\eps$ as the stiffness matrix $\mathds{A}^{\Pone}$ of the $\Pone$ discretization of an appropriate problem;
\item we approximate the right-hand side $\mathds{F}^\eps$ of the MsFEM problem by a right-hand side $\mathds{F}^{\Pone}$ which can be computed in a manner consistent with a $\Pone$ discretization;
\item we postprocess the $\Pone$ solution to obtain an approximation of the reference solution.  
\end{enumerate}
We nowhere use in these steps that the discrete problem we are actually solving (here~\eqref{eq:discrete-macro}) in fact corresponds to some discretization (here, a Petrov-Galerkin discretization) of the reference problem. This correspondence is useful to estimate the error introduced by the non-intrusive implementation (a task we perform in Section~\ref{sec:analysis} below), but it does not necessarily hold for other variants of MsFEM (e.g. the oversampling variant briefly mentioned in Section~\ref{sec:conc} below) and it is not required to put the non-intrusive approach in action.

We expect the computational cost of Algorithm~\ref{alg:nonin-msfem-diff} to be smaller than that of Algorithm~\ref{alg:msfem-diff}, since $\mathds{F}^{\Pone}$ is cheaper to compute than $\mathds{F}^\eps$ (we do not need to use a quadrature rule on the fine mesh). On the other hand, Algorithm~\ref{alg:nonin-msfem-diff} (and non-intrusive implementations in general) may introduce additional numerical errors. We estimate these in Section~\ref{sec:analysis}.

\section{Comparison of Galerkin and Petrov-Galerkin MsFEM}\label{sec:analysis}

In this section, we estimate the difference between the solutions to the Galerkin and the Petrov-Galerkin approximations~\eqref{eq:diffusion-MsFEM} and~\eqref{eq:diffusion-MsFEM-testP1}. We first recall that, for any $v^\eps_H \in V_H^\eps$, there exists a unique $v_H \in V_H$ such that~\eqref{eq:expansion-MS-P1} holds. We now establish a variational relation between $v_H^\eps$ and $v_H$.

\begin{lemma} \label{lem:expansion-MS-P1-char}
Let $v^\eps_H \in V_H^\eps$. There exists a unique $v_H \in V_H$ such that~\eqref{eq:expansion-MS-P1} holds, and $v_H$ is the unique solution in $V_H$ to the problem
\begin{equation}   \label{eq:expansion-MS-P1-char1}
  \forall w_H \in V_H, \qquad a^{\overline{A}}(v_H,w_H) = a^\eps(v^\eps_H,w_H),
\end{equation}
where $\overline{A}$ is defined by~\eqref{eq:linear-system-effective}. In addition, we have $\dps \| \nabla v_H \|_{L^2(\Omega)} \leq \frac{M}{m} \, \left\| \nabla v_H^\eps \right\|_{L^2(\Omega)}$.
\end{lemma}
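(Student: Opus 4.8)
The plan is to establish the lemma in three parts: the one-to-one correspondence, the variational characterization of $v_H$, and the stability bound. The correspondence has already been proved in the discussion surrounding~\eqref{eq:expansion-MS-P1}, so I would simply invoke it, stating that the expansion~\eqref{eq:expansion-MS-P1} defines $v_H$ uniquely from $v_H^\eps$. The heart of the argument is the variational identity~\eqref{eq:expansion-MS-P1-char1}, and here I expect to recycle the key computation already carried out in Section~\ref{sec:equivalent}.

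For the variational identity, I would fix an arbitrary $w_H \in V_H$ and compute $a^\eps(v_H^\eps, w_H)$ directly. Since $w_H \in V_H$ is piecewise affine, its gradient is piecewise constant, so I can write $\nabla w_H = \sum_{\alpha} (\partial_\alpha w_H)\vert_K\, e_\alpha$ on each $K$. Using the gradient of the expansion~\eqref{eq:expansion-MS-P1}, namely $\nabla v_H^\eps = \sum_\beta (\partial_\beta v_H)\vert_K\,(e_\beta + \nabla \VK{\beta})$ on $K$, I would expand
\begin{equation*}
  a^\eps(v_H^\eps, w_H) = \sum_{K \in \mathcal{T}_H} \sum_{\alpha,\beta=1}^d (\partial_\alpha w_H)\vert_K \left( \int_K e_\alpha \cdot A^\eps \left( e_\beta + \nabla \VK{\beta} \right) \right) (\partial_\beta v_H)\vert_K.
\end{equation*}
The inner integral is exactly $a^\eps_K(x^\beta + \VK{\beta}, x^\alpha)$, which by the symmetry inherent in~\eqref{eq:linear-system-effective} and the definition of $\overline{A}$ equals $|K|\,\overline{A}_{\alpha,\beta}\vert_K$ once one checks that replacing the test argument $x^\alpha$ by $x^\alpha + \VK{\alpha}$ leaves the integral unchanged. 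This last point is the crux: I would verify it via the same integration-by-parts argument used in~\eqref{eq:IPP-bubbles}, observing that $a^\eps_K(x^\beta + \VK{\beta}, \VK{\alpha}) = 0$ because $\VK{\alpha}$ vanishes on $\partial K$ while $x^\beta + \VK{\beta}$ is $A^\eps$-harmonic in $K$ by~\eqref{eq:MsFEM-Vxy}. Recognizing the resulting sum as $\sum_K \int_K \nabla w_H \cdot \overline{A}\, \nabla v_H = a^{\overline{A}}(v_H, w_H)$ then gives~\eqref{eq:expansion-MS-P1-char1}. Uniqueness of the solution in $V_H$ follows from the coercivity implied by the lower bound in~\eqref{ass:elliptic_Abar}.

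For the stability estimate, I would test~\eqref{eq:expansion-MS-P1-char1} with $w_H = v_H$ and bound both sides. The left side is coerced from below using $m\,\|\nabla v_H\|_{L^2(\Omega)}^2 \leq a^{\overline{A}}(v_H, v_H)$ via~\eqref{ass:elliptic_Abar}, while the right side $a^\eps(v_H^\eps, v_H)$ is bounded above by $M\,\|\nabla v_H^\eps\|_{L^2(\Omega)}\,\|\nabla v_H\|_{L^2(\Omega)}$ using the upper bound in~\eqref{ass:elliptic}. Dividing through by $\|\nabla v_H\|_{L^2(\Omega)}$ yields the claimed inequality $\|\nabla v_H\|_{L^2(\Omega)} \leq \frac{M}{m}\,\|\nabla v_H^\eps\|_{L^2(\Omega)}$.

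The main obstacle, as flagged above, is the verification that the test argument in the integral can be promoted from $x^\alpha$ to $x^\alpha + \VK{\alpha}$ without changing its value, which is what identifies the computation with the matrix $\overline{A}$ of~\eqref{eq:linear-system-effective} rather than some asymmetric variant. This is precisely an instance of the orthogonality exploited in Lemma~\ref{lem:MsFEMP1-macro-diff}, so the genuinely new content of this lemma is modest; the effort lies in organizing the algebra cleanly and being careful about the index placement $\overline{A}_{\alpha,\beta}$ versus $\overline{A}_{\beta,\alpha}$ so that the final bilinear form is correctly $a^{\overline{A}}(v_H, w_H)$ and not its transpose.
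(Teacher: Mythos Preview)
Your proposal is correct and follows essentially the same route as the paper: expand $\nabla v_H^\eps$ via~\eqref{eq:expansion-MS-P1}, pair it against the piecewise constant $\nabla w_H$, and identify the resulting element-wise integrals with $|K|\,\overline{A}|_K$ using the $A^\eps$-harmonicity from~\eqref{eq:MsFEM-Vxy}; then test with $w_H=v_H$ for the stability bound. The only difference is expository: the paper absorbs your ``promotion'' step (the orthogonality $a^\eps_K(x^\beta+\VK{\beta},\VK{\alpha})=0$) into the terse phrase ``Using~\eqref{eq:MsFEM-Vxy}, \eqref{eq:linear-system-effective}'', whereas you spell it out explicitly.
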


\begin{proof}
We take some $v^\eps_H \in V_H^\eps$ and expand it following~\eqref{eq:expansion-MS-P1}: we thus write $\dps v_H^\eps = v_H + \sum_{K \in \mathcal{T}_H} \sum_{\alpha=1}^d (\partial_\alpha v_H) \vert_K \ \VK{\alpha}$ for some $v_H \in V_H$. Consider now some $w_H \in V_H$. Using~\eqref{eq:MsFEM-Vxy}, \eqref{eq:linear-system-effective} and that $\nabla w_H$ and $\nabla v_H$ are piecewise constant, we compute that
$$
a^\eps(v^\eps_H,w_H)
=
\sum_{K \in \mathcal{T}_H} \sum_{\alpha,\beta=1}^d (\partial_\beta w_H)\vert_K \ \left( \int_K \! e_\beta \cdot A^\eps \left(e_\alpha + \nabla \VK{\alpha}\right) \right) \ (\partial_\alpha v_H)\vert_K
=
a^{\overline{A}}(v_H,w_H).
$$
We thus observe that $v_H$ satisfies~\eqref{eq:expansion-MS-P1-char1}. In addition, the equation~\eqref{eq:expansion-MS-P1-char1} completely characterizes $v_H$, since $a^{\overline{A}}$ is coercive on $H^1_0(\Omega)$ in view of~\eqref{ass:elliptic_Abar}. The estimate on $v_H$ directly follows by taking $w_H = v_H$ in~\eqref{eq:expansion-MS-P1-char1} and using the coercivity of $\overline{A}$ and the upper bound on $A^\eps$.
\end{proof}

We now proceed with the following error estimate.

\begin{lemma} \label{lem:diff-MsFEM-PG-estimate}
Let $u^{\eps,G}_H$ denote the solution to the approximation~\eqref{eq:diffusion-MsFEM} (provided by Algorithm~\ref{alg:msfem-diff}), where we have added the superscript $G$ to emphasize that~\eqref{eq:diffusion-MsFEM} is a {\em Galerkin} approximation. Let $u^{\eps,PG}_H$ be the solution to the Petrov-Galerkin appproximation~\eqref{eq:diffusion-MsFEM-testP1} (provided by Algorithm~\ref{alg:nonin-msfem-diff}). Assume that $f \in L^2(\Omega)$. There exists a constant $C$ independent of $\eps$, $H$ and $f$ such that
\begin{equation*}
  \left\| u^{\eps,G}_H - u^{\eps,PG}_H \right\|_{H^1(\Omega)} \leq C \, H \, \| f \|_{L^2(\Omega)}.
\end{equation*}
\end{lemma}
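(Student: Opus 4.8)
The plan is to work entirely with the difference $e := u^{\eps,G}_H - u^{\eps,PG}_H$, which lies in $V_H^\eps$ since both solutions do. Let $\delta_H \in V_H$ be the unique $\Pone$ representative of $e$ provided by the expansion~\eqref{eq:expansion-MS-P1}, so that $e = \delta_H + b$ with bubble part $\dps b = \sum_{K \in \mathcal{T}_H}\sum_{\alpha=1}^d (\partial_\alpha \delta_H)\vert_K \, \VK{\alpha}$. Since $V_H^\eps \subset H^1_0(\Omega)$, it suffices by the Poincaré inequality to control $\|\nabla e\|_{L^2(\Omega)}$, and I would reduce this to controlling $\|\nabla \delta_H\|_{L^2(\Omega)}$.

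First I would characterise $\delta_H$ variationally. Applying Lemma~\ref{lem:expansion-MS-P1-char} to $e$ gives $a^{\overline{A}}(\delta_H, w_H) = a^\eps(e, w_H)$ for all $w_H \in V_H$. I would then evaluate the right-hand side using the two defining problems. For any $w_H \in V_H$, let $w^\eps_H \in V_H^\eps$ be its MsFEM counterpart, so $w^\eps_H - w_H$ is a bubble vanishing on each $\partial K$; the integration-by-parts identity behind~\eqref{eq:IPP-bubbles} (valid because every element of $V_H^\eps$ is $A^\eps$-harmonic on each $K$) yields $a^\eps(u^{\eps,G}_H, w^\eps_H - w_H) = 0$, whence by~\eqref{eq:diffusion-MsFEM} one gets $a^\eps(u^{\eps,G}_H, w_H) = a^\eps(u^{\eps,G}_H, w^\eps_H) = F(w^\eps_H)$. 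Combined with $a^\eps(u^{\eps,PG}_H, w_H) = F(w_H)$ from~\eqref{eq:diffusion-MsFEM-testP1}, this gives the clean identity
\begin{equation*}
  a^{\overline{A}}(\delta_H, w_H) = F(w^\eps_H - w_H) = \int_\Omega f \left( w^\eps_H - w_H \right) \qquad \forall w_H \in V_H,
\end{equation*}
so the whole discrepancy is driven by the right-hand-side mismatch integrated against the bubble part of $w^\eps_H$.

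Next I would test this identity with $w_H = \delta_H$, for which $w^\eps_H - w_H = b$. Coercivity of $\overline{A}$ from~\eqref{ass:elliptic_Abar} together with Cauchy--Schwarz gives $m \|\nabla \delta_H\|_{L^2(\Omega)}^2 \leq a^{\overline{A}}(\delta_H,\delta_H) = \int_\Omega f\, b \leq \|f\|_{L^2(\Omega)} \|b\|_{L^2(\Omega)}$. The crux is then an $O(H)$ bound on $\|b\|_{L^2(\Omega)}$. On each $K$, testing~\eqref{eq:MsFEM-Vxy} against $\VK{\alpha}$ and using~\eqref{ass:elliptic} yields the energy estimate $\|\nabla \VK{\alpha}\|_{L^2(K)} \leq (M/m)\,|K|^{1/2}$; since $\VK{\alpha} \in H^1_0(K)$, the Poincaré inequality on $K$ (whose constant scales like $H$) gives $\|\VK{\alpha}\|_{L^2(K)} \leq C H\, |K|^{1/2}$. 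As $\nabla \delta_H$ is constant on $K$, summing over elements produces $\|b\|_{L^2(\Omega)} \leq C H \|\nabla \delta_H\|_{L^2(\Omega)}$, hence $\|\nabla \delta_H\|_{L^2(\Omega)} \leq C H \|f\|_{L^2(\Omega)}$.

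Finally I would transfer this bound back to $e$. Writing $\nabla e\vert_K = \sum_{\alpha=1}^d (\partial_\alpha \delta_H)\vert_K (e_\alpha + \nabla \VK{\alpha})$ and using $\|e_\alpha + \nabla \VK{\alpha}\|_{L^2(K)} \leq (1 + M/m)|K|^{1/2}$ together with the constancy of $\nabla \delta_H$ on $K$ gives $\|\nabla e\|_{L^2(\Omega)} \leq C \|\nabla \delta_H\|_{L^2(\Omega)}$; a Poincaré inequality on $\Omega$ then upgrades this to the full $H^1$ norm, yielding the claim with $C$ depending only on $m$, $M$, $d$ and $\Omega$. The main obstacle is the middle step: one must recognise that the Galerkin/Petrov--Galerkin gap reduces \emph{exactly} to the right-hand-side mismatch against element bubbles, and that the single factor $H$ comes from the Poincaré inequality applied to these bubbles (whose gradients are only $O(1)$ but which themselves are $O(H)$ in $L^2$). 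Relating $\|\nabla e\|$ and $\|\nabla \delta_H\|$ is routine but must be carried out to close the argument.
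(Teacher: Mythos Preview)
Your proof is correct and rests on the same ingredients as the paper's: the splitting of the multiscale error into its $\Pone$ part plus a bubble, the integration-by-parts identity~\eqref{eq:IPP-bubbles}, and the Poincar\'e inequality on each $K$ to extract the factor $H$ from the bubble. The organization differs slightly. The paper works directly with $a^\eps(e^\eps_H,e^\eps_H)$, splits the test function, and uses coercivity of $A^\eps$ to arrive at $m\|\nabla e^\eps_H\|^2 \leq (f,e^{\rm osc}_H)$; it then chains $\|e^{\rm osc}_H\|_{L^2} \lesssim H\|\nabla e^\Pone_H\| \lesssim H\|\nabla e^\eps_H\|$ (the last step via Lemma~\ref{lem:expansion-MS-P1-char}) and divides through. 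You instead invoke the variational characterization of Lemma~\ref{lem:expansion-MS-P1-char} first to obtain the clean identity $a^{\overline{A}}(\delta_H,w_H)=F(w^\eps_H-w_H)$, apply coercivity of $\overline{A}$ to bound $\|\nabla\delta_H\|$, and only at the end transfer back to $\|\nabla e\|$ via the explicit expression for $\nabla e$ in terms of $e_\alpha+\nabla\VK{\alpha}$. Both routes close with the same constants (depending on $m$, $M$, $d$ and mesh regularity); yours isolates the ``right-hand-side mismatch'' interpretation more transparently, while the paper's avoids the separate final step of comparing $\|\nabla e\|$ to $\|\nabla\delta_H\|$.
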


The classical error estimate for the Galerkin MsFEM approach~\eqref{eq:diffusion-MsFEM} is obtained in the literature under the assumption that $A^\eps$ is actually the rescaling of a given periodic matrix. The bound in the estimate reads $C \, (H + \sqrt{\eps} + \sqrt{\eps/H})$ for some $C$ independent of $\eps$ and $H$ (see e.g.~\cite{efendiev_multiscale_2009}). Lemma~\ref{lem:diff-MsFEM-PG-estimate} and a triangle inequality show that the same convergence rate holds for the Petrov-Galerkin MsFEM approach~\eqref{eq:diffusion-MsFEM-testP1}, of course under the same periodicity assumption.

\begin{proof}
Let $e^\eps_H = u^{\eps,G}_H - u^{\eps,PG}_H$. Since the numerical approximations $u^{\eps,G}_H$ and $u^{\eps,PG}_H$ both belong to $V_H^\eps$, we are in position to use~\eqref{eq:expansion-MS-P1} for $e^\eps_H \in V_H^\eps$: there thus exists $e^\Pone_H \in V_H$ such that
\begin{equation} \label{eq:toto2}
  e^\eps_H = e^\Pone_H + e^{\rm osc}_H \quad \text{with} \quad e^{\rm osc}_H = \sum_{K\in\mathcal{T}_H} \sum_{\alpha=1}^d (\partial_\alpha e^\Pone_H)\vert_K \ \VK{\alpha}.
\end{equation}
We can thus write
\begin{equation*}
  a^\eps(e^\eps_H,e^\eps_H)
  =
  a^\eps\left(u^{\eps,G}_H,e^\eps_H\right) -
  a^\eps\left(u^{\eps,PG}_H,e^\Pone_H\right) -
  a^\eps\left(u^{\eps,PG}_H,e^{\rm osc}_H\right).
\end{equation*}
Since $e^\eps_H$ can be used as a test function in~\eqref{eq:diffusion-MsFEM} and $e^\Pone_H$ in~\eqref{eq:diffusion-MsFEM-testP1}, this implies
$$
  a^\eps(e^\eps_H,e^\eps_H)
  =
  (f,e^\eps_H)_{L^2(\Omega)} - (f,e^\Pone_H)_{L^2(\Omega)} - a^\eps(u^{\eps,PG}_H,e^{\rm osc}_H)
  =
  (f,e^{\rm osc}_H)_{L^2(\Omega)} - a^\eps(u^{\eps,PG}_H,e^{\rm osc}_H).
$$
Using that $e^{\rm osc}_H$ vanishes on all the edges of the coarse mesh (because of the boundary conditions satisfied by $\VK{\alpha}$), the same integration by parts that led to~\eqref{eq:IPP-bubbles} shows that $\dps a^\eps\left(u^{\eps,PG}_H,e^{\rm osc}_H\right) = 0$. We therefore infer from the above equation and the Cauchy-Schwarz inequality that
\begin{equation} \label{eq:toto}
  a^\eps(e^\eps_H,e^\eps_H) \leq \| f \|_{L^2(\Omega)} \, \left\| e^{\rm osc}_H \right\|_{L^2(\Omega)}.
\end{equation}
We proceed by bounding $\| e^{\rm osc}_H \|_{L^2(\Omega)}$ from above in terms of $\| \nabla e^\eps_H \|_{L^2(\Omega)}$ in the right-hand side of~\eqref{eq:toto}. The Poincaré inequality in each $K$ implies that there exists a constant $C$, independent of $H$ but dependent on the regularity of the mesh, such that, for any $K \in \mathcal{T}_H$,
\begin{equation}
    \left\| e^{\rm osc}_H \right\|_{L^2(K)} \leq C \, H \, \left\| \nabla e^{\rm osc}_H \right\|_{L^2(K)}.
    \label{eq:MsFEM-PG-estimate1}
\end{equation}
Using the problem~\eqref{eq:MsFEM-Vxy} satisfied by each $\VK{\alpha}$, the identity~\eqref{eq:toto2} and the fact that $\partial_\alpha e^\Pone_H$ is constant in each $K$, we observe that $e^{\rm osc}_H$ satisfies the following variational formulation in each $K\in\mathcal{T}_H$:
\begin{equation*}
  \forall v \in H^1_0(K), \qquad
  a^\eps_K\left(e^{\rm osc}_H,v\right) 
  =
  -\sum_{\alpha=1}^d (\partial_\alpha e^\Pone_H)\vert_K \ a^\eps_K\left(x^\alpha,v\right) 
  = 
  -a^\eps_K\left(e^\Pone_H,v\right).
\end{equation*}
Upon testing against $v = e^{\rm osc}_H \in H^1_0(K)$ and using the bounds~\eqref{ass:elliptic}, it follows that $\dps \left\| \nabla e^{\rm osc}_H \right\|_{L^2(K)} \leq \frac{M}{m} \, \left\| \nabla e^\Pone_H \right\|_{L^2(K)}$. We thus deduce from~\eqref{eq:MsFEM-PG-estimate1} that
\begin{equation}
  \left\| e^{\rm osc}_H \right\|_{L^2(\Omega)}
  \leq 
  C \, H \ \frac{M}{m} \, \left\| \nabla e^\Pone_H \right\|_{L^2(\Omega)}.
  \label{eq:MsFEM-PG-estimate3}
\end{equation}
In order to estimate $\nabla e^\Pone_H$ in terms of $e^\eps_H$, we apply Lemma~\ref{lem:expansion-MS-P1-char} to $v_H^\eps = e^\eps_H \in V_H^\eps$ and obtain that $\dps \left\| \nabla e^\Pone_H \right\|_{L^2(\Omega)} \leq \frac{M}{m} \, \left\| \nabla e^\eps_H \right\|_{L^2(\Omega)}$. Upon inserting this estimate and~\eqref{eq:MsFEM-PG-estimate3} in the right-hand side of~\eqref{eq:toto}, and using the lower bound~\eqref{ass:elliptic} on $A^\eps$ in the left-hand side of~\eqref{eq:toto}, it follows that $\dps \| \nabla e^\eps_H \|_{L^2(\Omega)} \leq C \, H \, \| f \|_{L^2(\Omega)}$. We conclude the proof by the application of a Poincaré inequality in $\Omega$.
\end{proof}

We now present some numerical examples to illustrate the above result. We take $\Omega = (0,1)^2$, $A^\eps(x) = a^\eps(x) \, \text{Id}$ with $a^\eps(x) = 1 + 100 \, \cos^2(\pi \, x_1 /\eps) \, \sin^2(\pi \, x_2 /\eps)$ (which is $\eps$-periodic) and $f(x) = \sin(x_1) \, \cos(x_2)$. For $\eps = \pi/150 \approx 0.02$, we consider the reference solution $u_{\rm ref}$ (computed in practice on a fine mesh of size $h=1/1024$), and, for various values of $H$, the solution $u^{\eps,G}_H$ to the Galerkin approximation~\eqref{eq:diffusion-MsFEM} and the solution $u^{\eps,PG}_H$ to the Petrov-Galerkin approximation~\eqref{eq:diffusion-MsFEM-testP1}. In Table~\ref{tab:LinGal-vs-LinPG}, we show the error $\dps \left\| u^{\eps,G}_H - u_{\rm ref} \right\|_{H^1(\Omega)}$ and the difference $\dps \left\| u^{\eps,G}_H -  u^{\eps,PG}_H \right\|_{H^1(\Omega)}$. As expected from Lemma~\ref{lem:diff-MsFEM-PG-estimate}, we observe that the difference $u^{\eps,G}_H - u^{\eps,PG}_H$ is extremely small (here by a factor at least of 300) in comparison to the error $u^{\eps,G}_H - u_{\rm ref}$. The intrusive and the non-intrusive MsFEMs thus share the same accuracy.

\begin{table}[ht!]
  \begin{center}
    \caption{Errors between $u^{\eps,G}_H$, $u^{\eps,PG}_H$ and $u_{\rm ref}$ for several values of $H$. \label{tab:LinGal-vs-LinPG}}
    \pgfplotstabletypeset[
  sci zerofill, 
  sci sep align, 
  sci generic={mantissa sep={\, \times \,}, exponent={10^{##1}} },
  columns={H,LinGal,Lin}, 
  columns/H/.style={
     column name=$H/\varepsilon$,  
     %
     preproc/expr={##1/0.02094},
     %
     postproc cell content/.code={%
        \ifnum\pgfplotstablepartno=0%
            \pgfkeys{/pgfplots/table/@cell content/.add={\hspace{0.25cm}}{}} 
        \fi%
     },
     precision=2,
    },
  columns/LinGal/.style={
     postproc cell content/.code={%
        \ifnum\pgfplotstablepartno=0%
            \pgfkeys{/pgfplots/table/@cell content/.add={\hspace{0.5cm}}{}} 
        \fi%
     },
     column name=$\left\| u^{\eps,G}_H - u_{\rm ref} \right\|_{H^1(\Omega)}$,
     sci, precision=2
    },
  columns/Lin/.style={
     postproc cell content/.code={%
        \ifnum\pgfplotstablepartno=0%
            \pgfkeys{/pgfplots/table/@cell content/.add={\hspace{0.55cm}}{}} 
        \fi%
     },
     column name=$\left\| u^{\eps,G}_H - u^{\eps,PG}_H \right\|_{H^1(\Omega)}$,
     sci, precision=2
    },
  every head row/.style={
      before row=\toprule,
      after row=\midrule},
  every last row/.style={
      after row=\bottomrule}
]{\GalvsPG}

  \end{center}
\end{table}

\section{Concluding remarks on possible extensions} \label{sec:conc}

Despite the fact that it significantly improves upon the classical FEM approach, the MsFEM approach using the basis functions $\phiEps{i}$ defined by~\eqref{eq:MsFEM-basis} suffers from a well-known shortcoming, due to the fact that \emph{affine} boundary conditions are imposed for $\phiEps{i}$. The method cannot yield an accurate approximation of the reference solution $u^\eps$ near the edges of the coarse mesh elements, since the exact solution oscillates along the edges while the numerical approximation does not. To overcome this drawback, several alternative definitions of the multiscale basis functions have been proposed, leading to different MsFEM variants, including the oversampling variant, introduced in~\cite{hou_multiscale_1997} and nowadays considered to be a reference MsFEM variant. We will investigate in~\cite{non_intrusif_long,biezemans_difficult_nodate} how the non-intrusive implementation procedure presented here can be extended to that case. We note that oversampling multiscale basis functions are again defined in terms of $\phiPone{i}$ in a {\em linear} manner, which should allow to obtain a formula analogous to~\eqref{eq:fondamental}. The stiffness matrices of the Galerkin and the Petrov-Galerkin approximations are in general different (in contrast to the case studied here, see Lemma~\ref{lem:MsFEMP1-macro-diff}), but formal arguments show that the two of them can be expressed as the stiffness matrix obtained using a standard $\Pone$ approximation of the single-scale problem~\eqref{eq:diffusion-pde-macro}, for an appropriately defined piecewise constant effective matrix $\overline{A}$. We refer to~\cite{non_intrusif_long,biezemans_difficult_nodate} for definite conclusions.

\smallskip

To outline the versality of the above procedure leading to a non-intrusive implementation, we eventually make a few remarks on the advection-diffusion problem
\begin{equation*}
  -\dive (A^\eps \nabla u^\eps) + b \cdot \nabla u^\eps = f \ \ \text{in $\Omega$}, \qquad u^\eps = 0 \ \ \text{on $\partial \Omega$}.
\end{equation*}
The MsFEM basis functions may be defined by~\eqref{eq:MsFEM-basis} or by a similar equation including the advection term (see e.g.~\cite{le_bris_numerical_2017,biezemans_difficult_nodate}). Oversampling may also be used. In all cases, for the same reasons as above, an identity of the type~\eqref{eq:fondamental} should hold, which allows to express the stiffness matrix of the approach as the stiffness matrix obtained using a standard $\Pone$ finite element approximation of a single-scale problem containing diffusion and advection terms, with appropriate definitions of a diffusion matrix $\overline{A}$ and an advection field $\overline{b}$ that are both piecewise constant. We again refer to~\cite{non_intrusif_long,biezemans_difficult_nodate} for details.

\section*{Acknowledgments}

The first author acknowledges the support of DIM Math INNOV. The work of the second and third authors is partially supported by ONR under grant N00014-20-1-2691 and by EOARD under grant FA8655-20-1-7043. These two authors acknowledge the continuous support from these two agencies. The fourth author thanks Inria for the financial support enabling his two-year partial leave (2020-2022) that has significantly facilitated the collaboration on this project.

\bibliographystyle{plain}
\bibliography{biblio_non_intrusif_BLLL}

\end{document}